\newtheorem{defn}{Definition}
\newtheorem{thm}{Theorem}
\newtheorem{prop}{Proposition}
\newcommand{\bR}{\mathbb{R}}
\newcommand{\cB}{\mathcal{B}}
\newcommand{\cM}{\mathcal{M}}
\DeclareMathOperator{\rmd}{d\!}
\begin{document}
\title{Does randomization matter in dynamic games?}
\author{Enxian~Chen\thanks{Department of Mathematics, National University of Singapore, 10 Lower Kent Ridge Road, Singapore 119076. E-mail: \href{mailto:e0046840@u.nus.edu}{e0046840@u.nus.edu}.}
\and
Wei~He\thanks{Department of Economics, The Chinese University of Hong Kong, Shatin, N.T., Hong Kong. E-mail: \href{hewei@cuhk.edu.hk}{hewei@cuhk.edu.hk}.}
\and
Yeneng~Sun\thanks{Risk Management Institute and Department of Economics, National University of Singapore, 21 Heng Mui Keng Terrace,
Singapore 119613. Email: \href{ynsun@nus.edu.sg}{ynsun@nus.edu.sg}.}
\and
Hanping Xu\thanks{Department of Mathematics, National University of Singapore, 10 Lower Kent Ridge Road, Singapore 119076. E-mail: \href{mailto:e0321212@u.nus.edu}{e0321212@u.nus.edu}.}
}
\date{Preliminary draft; this version: December 30, 2019}
\maketitle

\abstract{This paper investigates mixed strategies in dynamic games with perfect information. We present an example to show that a player may obtain higher payoff by playing mixed strategy. By contrast, the main result of the paper shows that every two-player zero-sum game with nature has the no-mixing property, which implies that mixed strategy is useless in this most classical class of games. As for applications, we show the existence of pure-strategy subgame-perfect equilibria in two-player zero-sum games with nature. Based on the main result, we also prove the existence of a universal subgame-perfect equilibrium that can induce all the pure-strategy subgame-perfect equilibria in such games. A generalization of the main result for multiple players and some further results are also discussed.

\bigskip

\textbf{JEL classification}: C72; C73

\smallskip

\textbf{Keywords}: Dynamic games, perfect information, zero-sum, no indifference, subgame-perfect equilibrium, no-mixing property.}

\newpage

\tableofcontents

\newpage

\section{Introduction}\label{sec-intro}

The notion of mixed strategy has been widely used in game theory and economics. Compared with pure strategies, there are several advantages for adopting mixed strategies. For example, the famous result of Nash shows that a mixed-strategy  equilibrium exists in a normal form game with finitely many actions, while such an existence result may fail if one focuses on pure strategies.\footnote{For example, the only Nash equilibrium in a matching pennies game is that each player chooses each action with equal probability, which is a mixed-strategy equilibrium.} For another example, we construct a very simple dynamic game in Section~\ref{sec-example}, where players move sequentially and the first mover is able to get a strictly higher payoff by adopting mixed strategies. The third example is the consumer search model that typically works with the price dispersion, meaning that sellers follow a mixed pricing strategy even though they sell the homogeneous product.

The observations above suggest that mixed strategies could matter in many game theoretical environments. However, the notion of mixed strategy is often criticized for its limited appeal in practical situations. It could be reasonable to argue that people rarely adopt mixed strategies when making decisions. In this paper, we study an important and widely adopted class of dynamic games, and prove the no-mixing property. To be precise, we focus on dynamic zero-sum games with sequential moves, and show that given any mixed-strategy subgame-perfect equilibrium, an arbitrary combination of actions taken from the equilibrium path is a pure-strategy subgame-perfect equilibrium delivering the same equilibrium payoffs.

In order to prove the no-mixing property, we show that when a player faces multiple optimal choices at some stage, she can choose any optimal choice rather than using a mixed strategy at that stage. It turns out that this local optimal choice (optimal at one stage) is also globally optimal (as part of a subgame-perfect equilibrium path). This novel result suggests that focusing on pure strategies is without loss in dynamic zero-sum games, as nothing can be gained by adopting the possibly more complicated mixed strategies.

The no-mixing property has important implications. In a dynamic game, if players are assumed to adopt mixed strategies, then it means not only that every player has the access to her own randomization device, but also that the outcome of such a randomization device is objectively verifiable by other players in subsequent subgames. The no-mixing property irons out the conceptual difficulties associated with mixed strategies.\footnote{See Section~3.2 in \cite{OR1994} for extensive discussions of the conceptual difficulties associated with mixed strategies.}

Our paper is closely related to the literature on dynamic game with perfect information, which mainly focuses on the pure-strategy subgame-perfect equilibrium. It is obvious that in every finite dynamic game with perfect information, a pure-strategy subgame-perfect equilibrium exists by using the backward induction analysis. This existence result can be extended to the setting of perfect-information dynamic games with general action spaces and without Nature; see, for example, \cite{Borgers1989, Borgers1991}, \cite{Harris1985}, \cite{HL1987}, and \cite{HLRR1990}. However, the pure-strategy equilibrium existence result may fail once Nature is present. \cite{HRR1995} constructed a four-stage dynamic game with perfect information. In that game, Nature moves in the third stage and the game does not possess any pure-strategy subgame-perfect equilibrium, while a mixed-strategy subgame-perfect equilibrium does exist. Besides the possible nonexistence of pure-strategy equilibrium, as mentioned in the beginning, we construct a dynamic game with perfect information in Section~\ref{sec-example}, where the first mover can get a strictly higher payoff by playing a mixed strategy. Thus, mixed strategies do matter in dynamic games with perfect information, in terms of both the equilibrium existence and the achievable equilibrium payoff set. In contrast, our result implies that focusing on pure strategies in dynamic zero-sum games is not restrictive in itself, which provides a possible justification for the wide usage of pure strategies in such games.\footnote{For more discussions, see \cite{EZ13}, \cite{SW2001}, and \cite{Ewerhart2000, Ewerhart2002}.}


Our main result has interesting applications. First, in the setting of dynamic zero-sum games with perfect information, we generalize the existence result of pure-strategy subgame-perfect equilibrium from \cite{HS2019}. Instead of imposing the atomless transitions condition as in \cite{HS2019}, we put no restrictions on the state transitions. Second, we propose the notion of universal subgame-perfect equilibrium in the sense that its realizations are all the possible pure-strategy subgame-perfect equilibria. Relying on the no-mixing property, it is straightforward to show that a universal subgame-perfect equilibrium exists. We also provide an improved backward induction algorithm, which is useful for identifying universal subgame-perfect equilibria in finite-horizon dynamic games with perfect information.



The rest of the paper is organized as follows. In Section~\ref{sec-model}, we describe the model of dynamic games with perfect information, and define the notion of (weak) no-mixing property. In Section~\ref{sec-example}, a simple example of dynamic game with perfect information is presented, in which some player can get a strictly higher payoff by playing a mixed strategy. Section~\ref{sec-result} proves the no-mixing property and discusses the applications. In Section~\ref{sec-general}, we extend our main result to multi-player dynamic games. Section~\ref{sec-discussion} provides some further discussions about the no-mixing property.



\section{Model}\label{sec-model}

\subsection{Continuous dynamic games}

In this section, we shall present the model for a general continuous perfect information dynamic game with Nature.

The set of players is $I_0 = \{0,1,\ldots, n\}$, where the players in $I = \{1,\ldots, n\}$ are active and player~$0$ is the Nature. Time is discrete, and can be indexed by $t = 0,1,2, \ldots$.

A product space $H_0 = X_0 \times S_0$ is the set of starting points, where $X_0$ is a compact metric space and $S_0$ is a Polish space (\textit{i.e.}, a complete separable metric space).\footnote{Here we follow notations in \cite{HS2019}. In each stage~$t \ge 1$, there will be a set of action profiles $X_t$ and a set of states $S_t$. Without loss of generality, we assume that the set of initial points is also a product space.}

In this paper, we focus on dynamic games with perfect information. In such games, all the players (including Nature) move sequentially and there is only one mover in each stage (this mover depends on history). At stage $t \ge 1$, if Nature is the only mover, then Nature's action  is chosen from a Polish space $S_t$, and other player~$i$'s action are a single point of a Polish space $X_{ti}$; if active player~$i$ is the only mover, then player~$i$'s action is chosen from a subset of the Polish space $X_{ti}$, and other  player~$j$'s action are a single point of the Polish space $X_{tj}$, and Nature's action is a single point of the Polish space $S_t$. We denote $X_t = \prod_{i\in I} X_{ti}$. Let $X^t = \prod_{0\le k \le t}X_k$ and $S^t = \prod_{0\le k \le t}S_k$. The Borel $\sigma$-algebras on $X_t$ and $S_t$ are denoted by $\cB(X_t)$ and $\cB(S_t)$, respectively. For any $t \ge 0$,  a history up to the stage~$t$ is a vector\footnote{By abusing the notation, we also view $h_{t} = (x_0, s_0, x_1, s_1, \ldots, x_{t}, s_{t} )$ as the vector $(x_0, x_1, \ldots, x_{t}, s_0, s_1, \ldots, s_{t})$ in $X^{t} \times S^{t}$. \label{fn-vector}}
$$h_{t} = (x_0, s_0, x_1, s_1, \ldots, x_{t}, s_{t} ) \in X^{t} \times S^{t}.$$
The set of all such possible histories is denoted by $H_{t}$ and $H_{t} \subseteq X^{t} \times S^{t}$.

Now we consider the Nature's behavior. For any $t\ge 1$, Nature's action is given by $f_{t0}$, which is a continuous mapping from $H_{t-1}$ to $\cM(S_t)$,\footnote{$\cM(S_t)$ denotes the set of all Borel probability measures on $S_t$ and is endowed with the topology of weak convergence.} that is, for any bounded continuous function $\psi$ on $S_t$, the integral
$$\int_{S_t} \psi(s_t) f_{t0}(\rmd s_t | h_{t-1})$$
is continuous in $h_{t-1}$.

For any $t \ge 1$ and $i\in I$, let $A_{ti}$ be a continuous compact valued correspondence\footnote{A correspondence is said to be continuous if it is both upper hemicontinuous and lower hemicontinuous. For more details, see Hildenbrand (1974).} from $H_{t-1}$ to $X_{ti}$ such that $A_{ti}(h_{t-1})$ is the set of available actions for player~$i\in I$ given the history $h_{t-1} \in H_{t-1}$, and let $A_t = \prod_{i\in I}A_{ti}$. In each stage~$t$, if an action correspondence $A_{ti}$ is not point valued for some player $i \in I$, then $A_{tj}$ is point valued for any $j \neq i, j \in I$, and $f_{t0}(h_{t-1}) \equiv \delta_{s_t}$ for some $s_t \in S_t$. That is, only player~$i$ is active in stage~$t$, while all the other players are inactive. If the state transition $f_{t0}$ does not put probability~$1$ on some point, then $A_{ti}$ must be point valued for any $i \in I$. That is, only Nature can move in stage~$t$, while all the players $i \in I$ are inactive. The set of all possible histories $H_t = Gr(A_t) \times S_t$, where $Gr(A_t)$ is the graph of $A_t$.

In an infinite-horizon game, for any $x =(x_0, x_1, \ldots) \in X^\infty$, let $x^t = (x_0, \ldots, x_t) \in X^t$ be the truncation of $x$ up to stage $t$. Truncations for $s \in S^\infty$ can be defined similarly. Let $H_{\infty}$ be the subset of $X^\infty \times S^\infty$ such that $(x,s) \in H_\infty$ if $(x^t,s^t) \in H_{t}$ for any $t\ge 0$. Then $H_\infty$ is the set of all possible histories  in this infinite-horizon game.\footnote{A finite horizon dynamic game can be regarded as a special case of an infinite horizon dynamic game in the sense that the action correspondence $A_{ti}$ is point-valued for each player $i \in I$ and $t \ge T$ for some stage $T \ge 1$; see, for example, \cite{Borgers1989}, \cite{HRR1995}, and \cite{HS2019}.} Hereafter, $H_{\infty}$ is endowed with the product topology.



For each player $i \in I$, the payoff function $u_i$ is a bounded continuous function from $H_{\infty}$ to $\bR$. Moreover, we assume that payoff functions satisfy the ``continuity at infinity" condition\footnote{see \cite{FL1983} and \cite{HS2019}.}: for each $T \ge 1$, let
\begin{equation} \label{eq-CaI}
w^T = \sup_{\substack{i\in I \\ (x,s)\in H_{\infty} \\ (\overline{x}, \overline{s} ) \in H_\infty \\ x^{T-1} = \overline{x}^{T-1} \\ s^{T-1} = \overline{s}^{T-1} }}
|u_i(x,s) - u_i(\overline{x}, \overline{s})|.
\end{equation}
Then a dynamic game is said to be ``continuous at infinity'' if $w^T \to 0$ as $T \to \infty$. This condition is standard and is widely used in dynamic games. It is obvious to see that every finite game and every game with discounting satisfy this condition.

\subsection{Strategies and subgame-perfect equilibria}

A mixed strategy for a player $i \in I$ should specify, for all $t \ge 1$ and all $h_{t-1} \in H_{t-1}$, the mixed action that the player $i$ will use at stage $t$ when the prior history of the game is $h_{t-1}$. Below is the formal definition:

\begin{defn}\label{defn-strategy}
For player $i \in I$, a mixed strategy $f_i$ is a sequence $\{f_{ti}\}_{t \ge 1}$ such that $f_{ti}$ is a Borel measurable mapping from $H_{t-1}$ to $\cM(X_{ti})$ and
$$\text{support } (f_{ti}(\cdot|h_{t-1})) \subset A_{ti}(h_{t-1})\footnote{If $\mu$ is a probability measure on a polish space $X$, then support ($\mu$) denotes the smallest closed subset $C$ of $X$ such that $\mu(C) = 1$.}$$
for all $t \ge 1$ and $h_{t-1} \in H_{t-1}$. A strategy profile $f = \{f_i\}_{i\in I}$ is a combination of strategies of all active players.
\end{defn}

In any subgame, a strategy profile induces a probability distribution over the set of histories. This probability distribution is called the path induced by the strategy profile in the subgame. Before describing how a strategy combination induces a path in Definition~\ref{defn-path}, we need to define some technical terms. Given a strategy profile $f = \{f_i\}_{i\in I}$, denote $\otimes_{i \in I_0} f_{(t'+1)i}$ as a transition probability from the set of histories $H_{t'}$ to $\cM (X_{t'+1})$. For the notational simplicity later on, we assume that $\otimes_{i \in I_0} f_{(t'+1)i} (\cdot | h_{t'})$ represents the strategy profile in stage~$t' + 1$ for a given history $h_{t'} \in H_{t'}$, where $\otimes_{i \in I_0} f_{(t'+1)i} (\cdot | h_{t'})$ is the product of the probability measures $f_{(t'+1)i} (\cdot | h_{t'})$, $i \in I_0$. If $\lambda$ is a finite measure on $X$ and $\nu$ is a transition probability from $X$ to $Y$, then $\lambda\diamond \nu$ is a measure on $X\times Y$ such that $\lambda\diamond \nu(A\times B) = \int_A \nu(B|x) \lambda(\rmd x)$ for any measurable subsets $A \subseteq X$ and $B\subseteq Y$.

\begin{defn}\label{defn-path}
Suppose that a strategy profile $f = \{f_i\}_{i\in I}$ and a history $h_{t} \in H_t$ are given for some $t \ge 0$. Let $\tau_{t} = \delta_{h_t}$, where $\delta_{h_t}$ is the probability measure concentrated at the point $h_t$. If $\tau_{t'} \in \cM(H_{t'})$ has already been defined for some $t' \ge t$, then let
$$\tau_{t'+1} = \tau_{t'}\diamond(\otimes_{i \in I_0} f_{(t'+1)i}).$$
Finally, let $\tau \in \cM(H_{\infty})$ be the unique probability measure on $H_\infty$ such that $\mbox{Marg}_{H_{t'}}\tau = \tau_{t'}$ for all $t' \ge t$. Then $\tau$ is called the path induced by $f$ in the subgame $h_t$. For all $i\in I$, $\int_{H_\infty}u_i \rmd \tau$ is the payoff of player~$i$ in this subgame.
\end{defn}

We are now ready to give the notion of subgame-perfect equilibrium. It requires that each player's strategy should be optimal in every subgame.

\begin{defn}[SPE]\label{defn-SPE'}
A subgame-perfect equilibrium is a strategy profile $f$ such that for all $i\in I$, $t \ge 0$,  and all $h_{t} \in H_{t}$, player~$i$ cannot improve his payoff in the subgame beginning at $h_t$ by a unilateral change in his strategy.
\end{defn}
\begin{defn}[weak no-mixing property]\label{defn-wex}
A mixed-strategy subgame-perfect equilibrium $f$ is said to have the weak no-mixing property if there exists a pure-strategy subgame-perfect equilibrium $g$, such that
$$g_{ti}(h_{t-1}) \in \text{support }(f_{ti}(\cdot|h_{t-1}))$$
for all $t \ge 1$, $i \in I$, and all $h_{t-1} \in H_{t-1}$.
\end{defn}

\begin{defn}[no-mixing property]\label{defn-ex}
A mixed-strategy subgame-perfect equilibrium $f$ is said to have the no-mixing property if for any pure-strategy profile $g$ which satisfies:
$$g_{ti}(h_{t-1}) \in \text{support }(f_{ti}(\cdot|h_{t-1}))$$
for all $t \ge 1$, $i \in I$, and all $h_{t-1} \in H_{t-1}$, then $g$ is a subgame-perfect equilibrium.
\end{defn}


\section{An example}\label{sec-example}

In this section, we present an example in which a player can get higher payoff by using mixed strategies. The game $G_1$ is shown in Figure~1:

\begin{figure}[htb]
\centering
\includegraphics[width=0.6\textwidth]{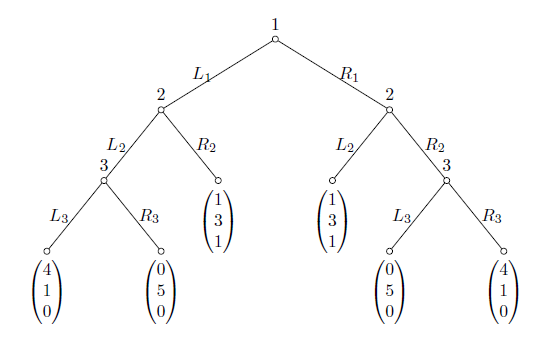}
\caption{The game $G_1$.}
\end{figure}

In this game, it is easy to see that player 1's SPE payoff is less than or equal to 1 if both players are restricted to pure strategies: otherwise, player 1's payoff must be 4, and we can see that SPE paths are $L_1 \to L_2 \to L_3$ and $R_1 \to R_2 \to R_3$. However, player 2 will deviate in both paths. Then we consider the following mixed strategy profile: player 1 chooses $0.5L_1 + 0.5R_1$; player 2 always chooses $0.5L_2 + 0.5R_2$; player 3 always chooses $0.5L_3 + 0.5R_3$. It is easy to check that this mixed strategy profile is a subgame-perfect equilibrium, and player 1's expected payoff is $1.5$, which is greater than the maximal payoff by playing pure strategy.


\section{Dynamic zero-sum games}\label{sec-result}

The example in Section~\ref{sec-example} shows that a player may benefit from mixed strategies in a general dynamic game. In contrast, in this section we focus on the most classical game: two-player zero-sum games with perfect information (with or without Nature), and we show a novel result in Section~\ref{main result}: every mixed strategy subgame-perfect equilibrium has the no-mixing property. This result also has many applications: in Section~\ref{existence} we prove the existence of pure strategy subgame-perfect equilibrium in zero-sum games with Nature. In Section~\ref{universal} we propose an improved backward induction algorithm based on the main result and we discuss some applications.

\subsection{The main result}\label{main result}

The main result in this section is as follows.

\begin{thm}\label{thm-main result}
If $G$ is a two-player dynamic zero-sum (or fixed sum) game with perfect information (with or without Nature), then each mixed-strategy subgame-perfect equilibrium $f = \{f_1, f_2\}$ has the no-mixing property.
\end{thm}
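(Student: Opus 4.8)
The plan is to work with the equilibrium \emph{payoff} function rather than to track mixed strategies directly. For each history $h$ let $V(h)$ denote player~$1$'s payoff in the subgame beginning at $h$ when both players follow the given mixed SPE $f$; by the fixed-sum condition player~$2$'s payoff is then $c - V(h)$. Writing $h_a$ for the history obtained by appending action $a$ (together with Nature's forced move) to $h$, I would first record the optimality built into $f$: whenever player~$i$ is the mover at $h$, the continuation value $Q_i^f(h,a)$ obtained by playing $a$ and then following $f$ is continuous in $a$ (by continuity of $u_i$, of the transitions, and by continuity at infinity), and since $f$ is an SPE the mixed action $f_{ti}(\cdot\mid h)$ is a best reply. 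Hence every $a$ in its support attains $\max_{a'} Q_i^f(h,a')$, and continuity promotes this from almost-every $a$ to every $a$ in the closed support.

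The decisive use of the zero-sum structure comes next. At a history where player~$1$ moves, indifference directly gives $V(h_a)=V(h)$ for every support action $a$, and $V(h)=\max_{a'} V(h_{a'})$. At a history where player~$2$ moves, indifference is a statement about player~$2$'s payoff; but because the game is fixed-sum, all of player~$2$'s support actions yield player~$2$ the same payoff and therefore yield player~$1$ the same payoff, so again $V(h_a)=V(h)$ on the support and now $V(h)=\min_{a'} V(h_{a'})$. (In a general game player~$2$'s indifference says nothing about player~$1$'s payoff, which is exactly why the example in Section~\ref{sec-example} escapes the conclusion.) At a Nature node one has $V(h)=\int V(h_s)\,f_{t0}(\rmd s\mid h)$. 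Thus $V$ satisfies a Bellman-type recursion in which the relevant optimum --- a $\max$, a $\min$, or an expectation --- is always attained on the supports of $f$.

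With this recursion in hand I would prove, for an arbitrary pure profile $g$ selected from the supports of $f$, that the payoff $V_1^g(h)$ induced by $g$ equals $V(h)$ at every history. In a finite-horizon game this is a backward induction: the two payoffs agree at a terminal stage, and the inductive step is immediate because $g$ selects a support action, the child values coincide by hypothesis, and that support action attains the optimum of the recursion. For the infinite-horizon case I would run the same comparison on the $T$-stage truncations and let $T\to\infty$, controlling the error by $w^T\to 0$. Granting $V_1^g \equiv V$, the SPE property of $g$ follows from the one-shot deviation principle (itself valid under continuity at infinity): at a player-$1$ node a deviation to $a'$ yields $V(h_{a'})\le V(h)$, at a player-$2$ node a deviation yields player~$1$ the value $V(h_{a'})\ge V(h)$ and hence cannot raise player~$2$'s payoff, and Nature's nodes impose no constraint.

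I expect the main obstacle to be the infinite-horizon bookkeeping rather than the conceptually clean zero-sum step: establishing that $V$ and the continuation values $Q_i^f(h,\cdot)$ are well defined, measurable, and continuous enough for the indifference-on-the-support argument, and then justifying both the recursion and the one-shot deviation principle in the limit via $w^T\to 0$. The zero-sum input, by contrast, enters at a single clean point --- converting player~$2$'s indifference into constancy of player~$1$'s payoff across the support --- and it is precisely this point that fails in the non-zero-sum example.
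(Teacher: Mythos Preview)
Your proposal is correct and reaches the same destination as the paper by the same two-step route: first show that $u_i(g\mid h)=u_i(f\mid h)$ for every history $h$ and both players, then invoke the one-shot deviation principle. The organization of the first step, however, differs. The paper fixes a history $h_{t-1}$ and runs a \emph{forward} replacement: it swaps $f$ for $g$ at $h_{t-1}$, checks that both players' payoffs at $h_{t-1}$ are unchanged (the zero-sum step enters here exactly as you describe), then moves to the child history $(h_{t-1},g_{t1}(h_{t-1}))$ and repeats, finally passing to the limit via continuity at infinity. You instead set up the value function $V$, record its Bellman recursion with the $\max$/$\min$/expectation attained on the supports of $f$, and prove $V_1^g\equiv V$ by \emph{backward} induction (or $T$-truncation in the infinite-horizon case). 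Your organization is the cleaner one at Nature nodes, where the paper's single forward path becomes awkward because Nature branches; the paper's organization avoids having to formalize the full recursion. The continuity claim for $Q_i^f(h,\cdot)$ that you flag as the main obstacle is asserted without proof in the paper as well (``the payoff function $\bar u_1(f\mid h_{t-1},a)$ is continuous on $a$''), so on that point you are no worse off, and you are right to single it out as the place where care is needed.
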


\begin{proof}[Proof of Theorem~\ref{thm-main result}]
Given any history $h_{t-1}$, let $f|_{h_{t-1}}$ denote the continuation strategy profile in the subgame beginning at $h_{t-1}$, and let $u_i(f|h_{t-1})$ denote the continuation payoff for player $i$ in the subgame. Suppose $g = \{g_1, g_2\}$ is a pure-strategy profile that satisfies:
$$g_{ti}(h_{t-1}) \in \text{support }(f_{ti}(\cdot|h_{t-1}))$$
for all $t \ge 1$, $i \in I$, and all $h_{t-1} \in H_{t-1}$, then we need to show that $g$ is a subgame-perfect equilibrium.

Fix an arbitrary history $h_{t-1}$ where Nature is not the mover at $h_{t-1}$, and we focus on the subgame beginning from $h_{t-1}$, without lose of generality, we can assume player 1 is the only mover at $h_{t-1}$. Let $f_1^{h_{t-1}}$ denote the strategy for player 1 that coincides with $f_1$ at all histories except for $h_{t-1}$ where it plays according to $g_1$. For player 2, let $f_2^{h_{t-1}} = f_2$. Below we can show that:
$$u_1(f_1^{h_{t-1}},f_2^{h_{t-1}}|h_{t-1}) = u_1(f|h_{t-1})$$
For each action $a \in A_{t1}(h_{t-1})$, let $\bar{u}_1(f|h_{t-1}, a)$ denote the expected payoff for player 1 at the subgame follows $h_{t-1}$ if he chooses $a$ at $h_{t-1}$ and then plays according to $f_1$.
Therefore, we have the following results:
$$u_1(f_1^{h_{t-1}},f_2^{h_{t-1}}|h_{t-1}) = \bar{u}_1(f|h_{t-1}, g_{t1}(h_{t-1}))$$
$$u_1(f|h_{t-1}) = \int_{A_{t1}(h_{t-1})} \bar{u}_1(f|h_{t-1}, a) f_{t1}(\rmd a|h_{t-1})$$
Since $f = (f_1, f_2)$ is a subgame-perfect equilibrium, we can easily see that:
$$u_1(f|h_{t-1}) \ge u_1(f_1^{h_{t-1}},f_2^{h_{t-1}}|h_{t-1})$$
If the equality does not hold, then we can derive that:
$$\int_{A_{t1}(h_{t-1})} \bar{u}_1(f|h_{t-1}, a) f_{t1}(\rmd a|h_{t-1}) > \bar{u}_1(f|h_{t-1}, g_{t1}(h_{t-1}))$$
since the payoff function $\bar{u}_1(f|h_{t-1}, a)$ is continuous on $a$, so from the above inequality there exists an open neighborhood $O$ such that each action in $O$ is not a best response for player 1, and this leads to a contradiction: since  $g_{t1}(h_{t-1}) \in \text{support }(f_{t1}(\cdot|h_{t-1}))$, hence instead of playing $g_{t1}$ at $h_{t-1}$, player 1 can increase his payoff by transferring the probability in $O$ (which is a positive number) to the set of best response. Therefore, $u_1(f_1^{h_{t-1}},f_2^{h_{t-1}}|h_{t-1}) = u_1(f|h_{t-1})$. In addition, since the game is zero-sum (or fixed sum), we also have:
$$u_2(f_1^{h_{t-1}},f_2^{h_{t-1}}|h_{t-1}) = u_2(f|h_{t-1}).$$

Now we consider the history $h_t = (h_{t-1}, g_{t1}(h_{t-1}))$, and use the same argument as above:
\begin{itemize}
\item
If player 1 is the only mover at $h_t$, then define $f_1^{h_t}$ as the strategy for player 1 that coincides with $f_1^{h_{t-1}}$ at all histories except for $h_{t}$ where it plays according to $g_1$. Let $f_2^{h_t} = f_2^{h_{t-1}}$. Then we can conclude that:
$$u_i(f_1^{h_{t}},f_2^{h_t}|h_{t}) = u_i(f^{h_{t-1}}|h_{t})\;\;\;\;\;\;\text{for }i = 1, 2;$$
and hence also have:
$$u_i(f_1^{h_{t}},f_2^{h_t}|h_{t-1}) = u_i(f|h_{t-1})\;\;\;\;\;\;\text{for }i = 1, 2.$$
\item
If player 2 is the only mover at $h_t$, then define $f_2^{h_t}$ as the strategy for player 2 that coincides with $f_2^{h_{t-1}}$ at all histories except for $h_{t}$ where it plays according to $g_2$. Let $f_1^{h_t} = f_1^{h_{t-1}}$. Then we can conclude that:
$$u_i(f_1^{h_{t}},f_2^{h_t}|h_{t}) = u_i(f^{h_{t-1}}|h_{t})\;\;\;\;\;\;\text{for }i = 1, 2;$$
and hence also have:
$$u_i(f_1^{h_{t}},f_2^{h_t}|h_{t-1}) = u_i(f|h_{t-1})\;\;\;\;\;\;\text{for }i = 1, 2.$$
\item
If the Nature is the only mover at $h_t$, then let $f_1^{h_t} = f_1^{h_{t-1}}$ and $f_2^{h_t} = f_2^{h_{t-1}}$, and obviously,
$$u_i(f_1^{h_{t}},f_2^{h_t}|h_{t-1}) = u_i(f|h_{t-1})\;\;\;\;\;\;\text{for }i = 1, 2.$$
\end{itemize}
Keep using this forward induction argument, we can obtain a sequence of strategy profile $\{f_1^{h_T}, f_2^{h_T}\}_{T\ge t-1}$ that satisfies:
$$u_i(f_1^{h_{T}},f_2^{h_T}|h_{t-1}) = u_i(f|h_{t-1})\;\;\;\;\;\;\text{for }i = 1, 2 \text{  and any  }T\ge t-1$$
By the construction of $f_i^{h_T}$ and since $u_i$ is continuous at infinity, we can see that
$$\lim\limits_{T\to \infty}u_i(f_1^{h_{T}},f_2^{h_T}|h_{t-1}) = u_i(g|h_{t-1})\;\;\;\;\;\;\text{for }i = 1, 2.$$
Thus, we conclude that
$$u_i(f|h_{t-1}) = u_i(g|h_{t-1})\;\;\;\;\;\;\text{for }i = 1, 2.$$

Now we are ready to prove that $g = \{g_1, g_2\}$ is a pure SPE. Fix any history $h_{t-1} \in H_{t-1}$ and assume player 1 is the mover at $h_{t-1}$. Since the game is continuous at infinity, so we only need to show that player 1 cannot improve his payoff in the subgame follows $h_{t-1}$ by a one-stage deviation at $h_{t-1}$: for any action $a \in A_{t1}(h_{t-1})$, combined with the above result and we have that:
\begin{displaymath}
\begin{split}
u_1(g_1, g_2|h_{t-1}) &= u_1(f_1, f_2|h_{t-1})\\&\ge u_1(f_1, f_2|h_{t-1}, a)\\&= u_1(g_1, g_2|h_{t-1}, a)
\end{split}
\end{displaymath}
The first and the second equality is from the above result, and the inequality is due to the fact that $f$ is a SPE. This implies that $g = \{g_1, g_2\}$ is not improvable by any one-stage deviation and hence is a pure SPE.
\end{proof}

As a direct application of this theorem, we can answer the question proposed at the beginning of this paper: does randomization help in a chess play? Now based on our theorem, we can see the answer is that randomization is useless in a chess play: because for any mixed SPE in mixed form, we can always get a pure SPE after realization of uncertainty. We also notice that most papers on chess play only consider pure SPE, for example, \cite{EZ13}, \cite{SW2001}, and \cite{Ewerhart2000, Ewerhart2002}. Therefore, this theorem implies that we can focus on pure SPE when studying a zero-sum game including chess play. Apart from this direct application, our theorem has many other interesting applications and we discuss them in the following subsections.

Below we shall present an example to show that a SPE may not have the (weak) no-mixing property in a two-player dynamic game without zero-sum condition. The game $G_2$ is shown in Figure~2:

\begin{figure}[htb]
\centering
\includegraphics[width=0.35\textwidth]{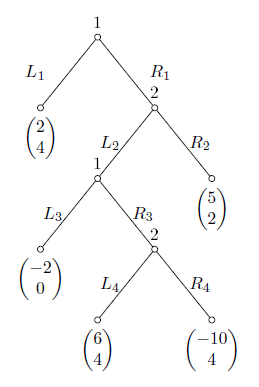}
\caption{The game $G_2$.}
\end{figure}

Firstly, we can find a mixed-strategy SPE by using the backward induction. In the last stage, player 2 is indifferent between $L_4$ and $R_4$, hence player 2 can choose a mixed strategy $0.5L_4 + 0.5R_4$ at this stage and it induces the expected payoff $(-2, 4)$ for two players. In the third stage, since player 1 is indifferent between $L_3$ and $R_4$, hence player 1 can also choose a mixed strategy $0.5L_3 + 0.5R_3$ at this stage and it induces the expected payoff $(-2, 2)$ for two players. Similarly, in the second stage, player 2 can use a mixed strategy $0.5L_2 + 0.5R_2$ which induces the expected payoff $(1.5, 2)$. Finally at stage 1, player 1 has to choose $L_1$. Thus, we obtain a mixed-strategy SPE $f = (L_1,\, 0.5L_3 + 0.5R_3;\, 0.5L_2 + 0.5R_2,\, 0.5L_4 + 0.5R_4)$.

However, this SPE $f$ does not have the weak no-mixing property, below we use the forward induction to derive this conclusion: suppose $g$ is a pure SPE such that $g(h_t) \in \text{support }f(h_t)$ for every history $h_t$, so player 1 has to choose $L_1$ in stage 1. Then in stage 2, player 2 must choose $L_2$, otherwise player will deviate to $R_1$ at stage 1 to improve his payoff. In stage 3, player 1 must choose $R_3$ to make sure that player 2 will not deviate at stage2. Finally at the last stage, if player 1 chooses $L_4$, then player 1 at stage 1 will deviate; if player 2 chooses $R_4$, then player 1 at the third stage will also deviate. Hence we derive a contradiction.

\subsection{The existence of pure-strategy subgame-perfect equilibria}\label{existence}

The existence of pure-strategy subgame-perfect equilibrium in a dynamic game with perfect information has been a fundamental problem since \cite{EZ13}. A well known result is that the subgame-perfect equilibrium can be obtained by using backward induction in finite games with perfect information. The generalization of this result has been considered by many authors. For example, for perfect information games without Nature, the existence of pure-strategy subgame-perfect equilibrium was shown in \cite{Borgers1989, Borgers1991},  \cite{Harris1985}, \cite{HL1987}, and  \cite{HLRR1990}. However, for perfect information games with the Nature, a pure-strategy subgame-perfect equilibrium need not exist as shown by a four-stage game in \cite{HRR1995}. Moreover, the nonexistence of a mixed-strategy subgame-perfect equilibrium in a five-stage game with Nature was shown by \cite{LM2003}. Thus, we need to find some general conditions to guarantee the existence of subgame-perfect equilibrium in perfect information games with Nature. Recently, \cite{HS2019} proved that if Nature's move is an atomless probability measure in any stage it moves (atomless transitions), then there exists a pure-strategy subgame-perfect equilibrium. In this subsection, we show that for any two-player zero-sum game with perfect information (with or without Nature), there always exists a pure-strategy subgame-perfect equilibrium. Compared with \cite{HS2019}, we do not require the Nature to satisfy the atomless transitions.

\begin{prop}\label{prop-existence}
If $G$ is a two-player zero-sum game with perfect information (with or without Nature), then it possesses a pure-strategy subgame-perfect equilibrium.
\end{prop}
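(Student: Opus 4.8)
The plan is to produce a pure-strategy subgame-perfect equilibrium in two steps: first establish that \emph{some} mixed-strategy subgame-perfect equilibrium $f=\{f_1,f_2\}$ exists in $G$, and then invoke the no-mixing property of Theorem~\ref{thm-main result} to extract a pure one. Once a mixed SPE $f$ is in hand, the second step is immediate. The correspondence $h_{t-1}\mapsto\text{support}(f_{ti}(\cdot\mid h_{t-1}))$ has nonempty closed values and is measurable in $h_{t-1}$, so by the Kuratowski--Ryll-Nardzewski measurable selection theorem there is a Borel measurable selection $g_{ti}(h_{t-1})\in\text{support}(f_{ti}(\cdot\mid h_{t-1}))$ for each $t$ and $i$. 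Since $G$ is zero-sum, Theorem~\ref{thm-main result} guarantees that $f$ has the no-mixing property, and hence the resulting pure profile $g=\{g_1,g_2\}$ is itself a subgame-perfect equilibrium.

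The substance of the proposition therefore lies in the first step, the existence of a mixed-strategy SPE, and this is precisely where the zero-sum structure must be used: recall from \cite{LM2003} that a mixed SPE need not exist in a general perfect-information game with Nature, so no appeal to a generic existence theorem will do. I would construct the equilibrium through the value function of the zero-sum game. For a finite-horizon game, proceed by backward induction: set the value equal to $u_1$ at the terminal stage and define it recursively. At a history where a player is the only mover, the value is the supremum (for player~$1$) or infimum (for player~$2$) of the continuation value over $A_{ti}(h_{t-1})$; since $A_{ti}$ is continuous and compact-valued and the continuation value is continuous in the current action, the optimum is attained, the set of optimal actions is nonempty and compact, and a measurable maximum theorem yields measurability of the value in the history together with a measurable optimal selection. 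At a history where Nature moves, the value is the integral of the continuation value against $f_{t0}(\cdot\mid h_{t-1})$, whose measurability in the history follows from measurability of the transition. Iterating, each mover may randomize over her nonempty set of optimal actions, producing a mixed-strategy profile $f$ whose optimality in every subgame is certified by the value function; hence $f$ is a mixed SPE.

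For the infinite-horizon case I would pass to the limit of finite-horizon truncations. Truncating $G$ at stage $T$ and applying the construction above yields value functions $V^T$ and associated SPEs; continuity at infinity, i.e.\ $w^T\to 0$ in \eqref{eq-CaI}, forces the $V^T$ to converge uniformly and lets one extract a limiting value function and a limiting mixed SPE for the full game, with optimality in each subgame verified by the one-stage deviation principle, which is valid precisely because the game is continuous at infinity.

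I expect the main obstacle to be this existence step, and within it two delicate points: preserving the requisite continuity (in the current action variable) and measurability (in the history) of the value function as it is pushed through Nature's integrations and the players' optimizations, so that the maximum theorem and a measurable selection apply at every stage; and the uniform-convergence argument that upgrades the finite-horizon constructions to a bona fide infinite-horizon SPE. By contrast, the purification step is routine: it is nothing more than a measurable selection from the equilibrium supports followed by a direct appeal to Theorem~\ref{thm-main result}.
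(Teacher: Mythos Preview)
Your two-step decomposition---obtain a mixed SPE, then purify via Theorem~\ref{thm-main result}---is exactly the paper's argument. The only difference is in the first step: the paper does not construct the mixed SPE but simply invokes Proposition~39 of \cite{HRR1995}, which already establishes existence of a mixed-strategy subgame-perfect equilibrium in two-player zero-sum games with (almost) perfect information and Nature. Your value-function construction with finite-horizon truncation and passage to the limit is essentially a reconstruction of what that cited result provides; it is not wrong, but it reinvents machinery you could quote, and the technical concerns you flag (joint continuity/measurability through Nature's integration, uniform convergence of truncations) are precisely the issues that \cite{HRR1995} has already handled. Your explicit treatment of the purification step---measurability of the support correspondence and a Kuratowski--Ryll-Nardzewski selection to produce a Borel-measurable $g$---is a detail the paper glosses over with the phrase ``each no-mixing purification of $f$,'' so on that point your write-up is actually more careful.
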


\begin{proof}
The Proposition 39 in \cite{HRR1995} shows that, for each two-player zero-sum game with (almost) perfect information, there exists a mixed subgame-perfect equilibrium $f$, then combined with Theorem~\ref{thm-main result}, each no-mixing purification of $f$ is a pure-strategy subgame-perfect equilibrium.
\end{proof}

This Proposition generalizes \cite{HRR1995}'s result for perfect information games, and the proof is very concise by using Theorem~\ref{thm-main result}. Proposition~\ref{prop-existence} will not hold without the zero-sum condition: \cite{HRR1995} gave an example that has three players. Below we give an example with two players.

Consider the following five-stage game. In stage 1, player 1 chooses $a_1 \in [0, 1]$. In stage 2, player 2 chooses $a_2 \in [0, 1]$. In stage 3, Nature chooses some $x \in [-a_1-a_2, a_1+a_2]$ based on the uniform distribution. After stage 3, player 1 and player 2 move sequentially. The subgame follows a history $(a_1, a_2, a_3)$ and associated payoffs are shown in Figure~3

\begin{figure}[htb]
\centering
\includegraphics[width=0.6\textwidth]{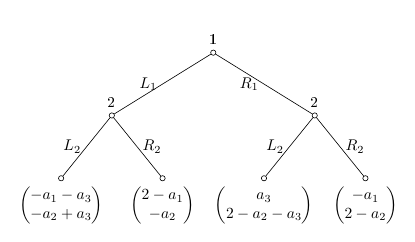}
\caption{The subgame $(a_1, a_2, a_3)$.}
\end{figure}

This game does not possess a pure-strategy subgame-perfect equilibrium: in the subgame $(a_1, a_2, a_3)$, it is easy to see that the SPE path is $(R_1, R_2)$ if $a_3 > 0$; and the SPE path is $(L_1, R_2)$ if $a_3 < 0$. Therefore, if $a_1 + a_2 > 0$, the expected payoff for player 1 and player 2 will be $(1 - a_1, 1 - a_2)$. Otherwise, if $a_1 + a_2 = 0$, which means $a_1 = a_2 = a_3 = 0$, then the SPE payoff for players 1 and 2 will be $(0, 0)$ or $(2, 0)$ or $(0, 2)$. Hence we can see the contradiction: if a player $i$ chooses a positive action in equilibrium, then his payoff is $(1-a_i)$, and he can improve his payoff by choosing a smaller number. Thus the only possible case is that both players choose $0$, however, one player gets payoff $0$ and hence will deviate to a small positive number.

\subsection{Universal subgame-perfect equilibria}\label{universal}
In this subsection, we discuss another application of the Theorem~\ref{thm-main result}. According to Theorem~\ref{thm-main result}, given any mixed subgame-perfect equilibrium, we know that each realization of this equilibrium is a pure-strategy subgame-perfect equilibrium. Therefore, it is natural to consider that whether there exists a mixed subgame-perfect equilibrium whose realizations are all the pure-strategy subgame-perfect equilibrium. Such subgame-perfect equilibrium is called a universal subgame-perfect equilibrium. We show that there exists a universal subgame-perfect equilibrium in two-player zero-sum games, and we propose an improved backward induction algorithm to find it. Some examples are also discussed.

Firstly, we describe this algorithm: it is almost the same as the usual backward induction, the only difference is when we encounter multiple optimal choices in some stage: the usual backward induction chooses an arbitrary optimal choice; But in our improved algorithm, we use a mixed strategy such that the support of this strategy coincides with the set of optimal choices.

Now we show that for two-player zero-sum games, the improved backward induction algorithm generates a subgame-perfect equilibrium that ``contains" all the pure subgame-perfect equilibrium. The following proposition is the main result and we only consider finite-horizon games in this subsection.

\begin{prop}\label{algorithm}
Given a two-player zero-sum (or fixed sum) game with perfect information, suppose $f$ is a mixed subgame-perfect equilibrium constructed by the improved backward induction algorithm. Then all the realizations of $f$ constitute the set of all the pure-strategy subgame-perfect equilibrium.
\end{prop}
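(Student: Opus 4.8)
The plan is to prove the set equality by two inclusions: (i) every realization of $f$ is a pure-strategy subgame-perfect equilibrium, and (ii) every pure-strategy subgame-perfect equilibrium is a realization of $f$. By the very construction of the improved algorithm, the support of $f_{ti}(\cdot\,|\,h_{t-1})$ equals the set of optimal actions at $h_{t-1}$, namely the maximizers of the moving player's own continuation payoff computed from the backward-induction values of the shorter subgames. Thus ``$g$ is a realization of $f$'' is exactly the assertion that the pure profile $g$ selects such an optimal action at every history, and the proposition reduces to showing that this \emph{optimality-at-every-history} condition characterizes the pure subgame-perfect equilibria.

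For inclusion (i) I would first record that the profile $f$ produced by the algorithm is genuinely a mixed subgame-perfect equilibrium: at each history the mover randomizes only over actions that attain the common (backward-induction) optimal continuation value, so the mixture is itself a best response against the continuation profile, and since the horizon is finite the one-shot deviation principle promotes this local optimality to full subgame perfection. Granting that $f$ is a mixed SPE, inclusion (i) is then immediate from Theorem~\ref{thm-main result}: the no-mixing property states precisely that any pure profile $g$ with $g_{ti}(h_{t-1})\in\text{support}(f_{ti}(\cdot\,|\,h_{t-1}))$ for all $t,i,h_{t-1}$ is a subgame-perfect equilibrium.

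The substance lies in inclusion (ii), where the zero-sum (fixed-sum) hypothesis is indispensable. The key lemma I would establish is that in a two-player zero-sum game with perfect information the subgame-perfect equilibrium payoff is \emph{unique} in every subgame and coincides with the backward-induction value $v(h)$. I would prove this by reverse induction on the stage: at a terminal node the value is the payoff; at a node where (say) player~$1$ moves, the induction hypothesis gives $u_i(g|(h,a))=v_i(h,a)$ for any SPE $g$ and every action $a$, so optimality forces $g$ to choose $a\in\arg\max_a v_1(h,a)$, whence $u_1(g|h)=\max_a v_1(h,a)=v_1(h)$ and, by the zero-sum relation, $u_2(g|h)=-u_1(g|h)=v_2(h)$; Nature nodes are handled by passing to expectations. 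The decisive feature is that in a zero-sum game all optimal actions deliver the \emph{same} pair of continuation values, so the propagated value is independent of any tie-breaking among indifferent actions — exactly the property that fails in a general game such as $G_2$, where a player's selection among indifferent actions alters the opponent's payoff. Given this lemma, inclusion (ii) follows at once, since the induction shows that every pure SPE selects a maximizing action at every history and therefore lies in the support of $f$.

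I expect the value-uniqueness lemma to be the main obstacle, in particular the verification that the zero-sum structure renders the backward-induction value well defined despite indifferences and that every SPE must realize that value in each subgame. The remaining ingredients are routine under the standing assumptions: nonemptiness and closedness of the sets of maximizers (from compactness of $A_{ti}(h_{t-1})$ and continuity of the value function via the maximum theorem), the existence of a strategy $f_{ti}$ whose support is exactly that closed set, and the reduction of subgame perfection to one-stage deviations in the finite horizon.
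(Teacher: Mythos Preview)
Your proposal is correct and follows essentially the same route as the paper: inclusion~(i) by invoking Theorem~\ref{thm-main result}, and inclusion~(ii) by using that in a two-player zero-sum game every subgame-perfect equilibrium yields the same payoff in each subgame, so any pure SPE must pick a maximizer of the backward-induction value at every history and hence lie in the support of $f$. The only difference is that the paper cites the value-uniqueness fact as ``well known'' and applies it directly (comparing $u_i(f|\cdot)$ and $u_i(g|\cdot)$ at the relevant subgames), whereas you plan to prove it from scratch by reverse induction; your added verification that the algorithm's output $f$ is itself an SPE is unnecessary here since the statement assumes it, but harmless.
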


\begin{proof}
From Theorem~\ref{thm-main result} we know that each realization of $f$ is a pure-strategy subgame-perfect equilibrium, hence we only need to show that for each pure-strategy subgame-perfect equilibrium $g$, we have:
$$g_{ti}(h_{t-1}) \in \text{support }(f_{ti}(\cdot|h_{t-1}))$$
for all $t \ge 1$, $i \in I$, and all $h_{t-1} \in H_{t-1}$. Consider the subgame beginning at $h_{t-1}$ and assume player $i$ is the only mover at this stage. It is well known that every two subgame-perfect equilibria generate the same payoffs for two players in a zero-sum game. Therefore,
$$u_i(g|h_{t-1}) = u_i(g|h_{t-1}, g_{ti}(h_{t-1})) = u_i(f|h_{t-1}, g_{ti}(h_{t-1})).$$
Then if $g_{ti}(h_{t-1}) \notin \text{support }(f_{ti}(\cdot|h_{t-1}))$, due to the construction of $f$ we can see that there exists an action $a \in A_{ti}(h_{t-1})$ such that
$$u_i(f|h_{t-1}, g_{ti}(h_{t-1})) < u_i(f|h_{t-1}, a) = u_i(g|h_{t-1}, a),$$
which implies:
$$u_i(g|h_{t-1}) < u_i(g|h_{t-1}, a),$$
this contradicts to that $g$ is a subgame-perfect equilibrium.
\end{proof}

It is worth noting that zero-sum is a necessary condition in this proposition: consider the example in Section~\ref{sec-example}, we obtained a mixed subgame-perfect equilibrium by using the improved backward induction, however, there is no pure-strategy subgame-perfect equilibrium in the realizations of that mixed subgame-perfect equilibrium. Proposition~\ref{algorithm} shows that in order to find all the pure-strategy subgame-perfect equilibria, we only need to use the improved backward induction, hence it is much faster than just using the usual backward induction. Below we provide two examples.

\begin{itemize}
\item
\textbf{Tian Ji's horse racing strategy}: This is an ancient Chinese story based on game theory. The story goes like this: Tian Ji is a high-ranking army commander in the country Qi. He likes to play horse racing with the king of the country and they often make bets. Tian Ji and the king both have three horses in different classes, namely, good, better and best. Of course, the king has slightly more superior horse in all three levels. The rule of the race is that there are three rounds; each of the horses must be used in one round, and the winner is the one who wins at least two rounds. In each round, the king chooses a horse first, Tian Ji observes the king's choice, then he makes his own choice. In the story, both of them use their ``good" horse against the opponent's ``good" horse, ``better" against the ``better", and ``best" against the ``best". So Tian Ji loses all the time. Tian Ji is unhappy about that until he meets Sun Bin, one of the most famous generals in Chinese history. Sun Bin brings up an idea: he uses Tian Ji's ``good" horse for racing the king's ``best" horse, then uses the ``best" one against the king's ``better" one, and the ``better" one against the ``good" one. As a result, Tian Ji loses the first round, but wins the second and the third round (because his ``best" and ``better" horse can still beat the king's ``better" and ``good" ones respectively), and eventually wins the race.

Now we can formulate this story as a dynamic game and it can be characterized by the following game tree (player 1 is the king, and player 2 is Tian Ji; let A, B, C denote ``best", ``better" and ``good" horses, respectively):\footnote{This game should be a six-stage game,  but in this game tree we omit the last two stages because each player has only one action at the last two stages.}

\begin{figure}[htb]
\centering
\includegraphics[width=0.9\textwidth]{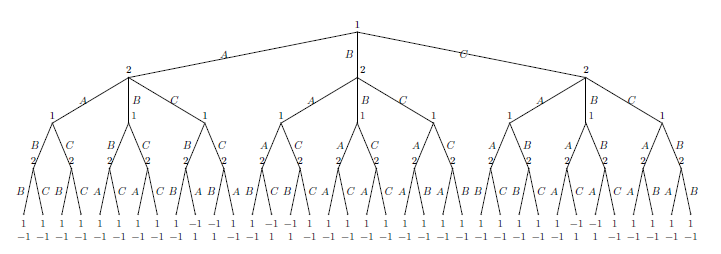}
\caption{The game $G_4$.}
\end{figure}

Although the game tree is very complicated, we can easily find all the pure-strategy subgame-perfect equilibrium by using our improved backward induction algorithm: below we just list all the SPE paths:
$$A \to C \to B \to A \to C \to B;$$
$$A \to C \to C \to B \to B \to A;$$
$$B \to A \to A \to C \to C \to B;$$
$$B \to A \to C \to B \to A \to C;$$
$$C \to B \to A \to C \to B \to A;$$
$$C \to B \to B \to A \to A \to C.$$
The first path is the one used in the story and we can see that all the six paths generate payoff $(-1, 1)$, which implies that player 2 (Tian Ji) always has the wining strategy!

\item
\textbf{A variation of the bargaining model}: Two players are trying to allocate one dollar between. In stage 1, player 1 makes an offer $(x_1, 1-x_1)$, then in stage 2, player 2 accepts or rejects the offer. If the offer is accepted, then the game is over and the players receive $(x_1, 1-x_1)$. Otherwise the game continuous to the stage 3, where player 2 makes an offer $(1-x_2, x_2)$, then in stage 4, player 1 accepts or rejects the offer. If the offer is accepted, then the game is over and the players receive $(1-x_2, x_2)$. Otherwise the game continuous to the last stage where Nature determines the payoff for player 1 based on a uniform distribution over $[0, 1]$.

This is a fixed sum game with infinitely many actions and Nature is a (passive) player. By using our improved backward induction algorithm, we obtain a mixed subgame-perfect equilibrium $f$: in stage 1, $f_{11}$ is a uniform distribution over $[\frac{1}{2}, 1]$; in stage 2, $f_{22}$ equals ``accept" only if $x_1 < \frac{1}{2}$; in stage 3, $f_{32}$ is a uniform distribution over $[\frac{1}{2}, 1]$; in stage 4, $f_{41}$ equals ``accept" only if $x_2 < \frac{1}{2}$. According to Proposition~\ref{algorithm}, this mixed subgame-perfect equilibrium will generate all the pure-strategy subgame-perfect equilibrium.
\end{itemize}


\section{Multi-player games}\label{sec-general}

In this section, we try to generalize the Theorem~\ref{thm-main result} to games with multiple players. The condition of zero-sum (fixed sum) is not enough to guarantee the (weak) no-mixing property, to see a counter example, we consider a fix sum game $G_4$ as shown in the following figure~5:

\begin{figure}[htb]
\centering
\includegraphics[width=0.4\textwidth]{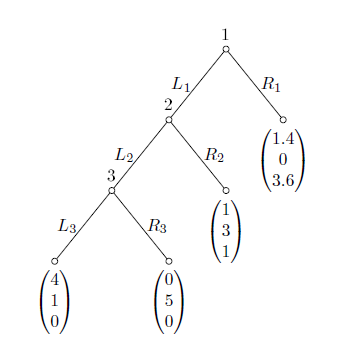}
\caption{The game $G_5$.}
\end{figure}

Firstly, notice that the strategy profile $f = (L_1; 0.5L_2 + 0.5R_2; 0.5L_3 + 0.5R_3)$ is a mixed subgame-perfect equilibrium, however, there is not pure-strategy subgame-perfect equilibrium which is in the support$(f)$. We prove this fact by forward induction: if $g$ is a pure-strategy subgame-perfect equilibrium that is in the support$(f)$, then player 1 must choose $L_1$ in the first stage. Thus in the second stage, player 2 has to choose $L_2$, otherwise player 1 will deviate at stage 1. Then we derive a contradiction: if player 3 chooses $L_3$, then player 2 will deviate to $R_2$; if player 3 chooses $R_3$, then player 1 will deviate to $R_1$.

This counter example suggests that we need some other condition to guarantee the no-mixing property for multi-player dynamic game with perfect information. Now we show that if the game satisfies the no indifference condition, which was introduced by Osborne and Rubinstein (1994, Exercise 100.2).

\begin{defn}
A dynamic game with perfect information satisfies the no indifference condition if for any two histories $h_\infty, h_{\infty}' \in H_\infty$, if
$$u_i(h_\infty) = u_i(h_\infty')$$
for some player $i \in I$, then $u_j(h_\infty) = u_j(h_\infty')$ for every $j \in I$.
\end{defn}

Using this definition, we are now ready to show the main result in this section as follows:
\begin{thm}\label{thm-second result}
If $G$ is a finite-horizon dynamic game with perfect information (without Nature), then each mixed-strategy subgame-perfect equilibrium $f = \{f_1,..., f_n \}$ has the no-mixing property.
\end{thm}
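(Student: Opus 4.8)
The plan is to run the same backward-induction scheme as in the proof of Theorem~\ref{thm-main result}, but with the no indifference condition playing the role that the zero-sum condition played there. Throughout I assume $G$ satisfies the no indifference condition stated just above, has finitely many stages, and has no Nature, so that the only randomness along any play path comes from the players' own mixing. Fix a mixed-strategy SPE $f$ and a pure profile $g$ with $g_{ti}(h_{t-1}) \in \text{support}(f_{ti}(\cdot|h_{t-1}))$ for all $t,i,h_{t-1}$; by Definition~\ref{defn-ex} I must show $g$ is an SPE. Writing $V(h) = (u_1(f|h),\ldots,u_n(f|h))$ for the continuation payoff vector under $f$, the heart of the argument is a structural lemma, proved by backward induction on the stage: for every history $h$, (i) every terminal history reached from $h$ with positive probability under $f$ carries the \emph{same} payoff vector, namely $V(h)$, so that $V(h)$ is a genuine terminal payoff vector rather than a nontrivial average; (ii) for the mover $k$ at $h$ one has $V_k(h) = \max_a V_k((h,a))$, and every $a \in \text{support}(f_{tk}(\cdot|h))$ attains this maximum; and, crucially, (iii) every such support action satisfies $V((h,a)) = V(h)$, i.e.\ all equilibrium-optimal actions agree on the payoff vector of \emph{all} players.

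For the base case (the last stage), the mover's optimal actions lead to terminal histories sharing her own payoff, so by no indifference they share every player's payoff, giving (i)--(iii). For the inductive step, optimality of $f$ forces every support action $a$ to deliver the mover $k$ the same continuation value $V_k((h,a))$; here I use the same continuity input as in Theorem~\ref{thm-main result}, namely that $a \mapsto V_k((h,a))$ is continuous, so that the open-neighborhood argument upgrades ``almost every support action attains the max'' to ``every support action attains it.'' Now the inductive hypothesis (i) says each child value $V((h,a))$ is itself the payoff vector of an actual terminal history; two children that agree on coordinate $k$ must therefore, by no indifference, agree on all coordinates. This yields (iii), and hence (i) for $h$.

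With the lemma in hand I would first show that the pure profile $g$ reproduces the equilibrium values, $u_i(g|h) = V_i(h)$ for all $i$ and all $h$, again by backward induction: at each $h$ the profile $g$ selects some $a^* = g_{tk}(h) \in \text{support}(f_{tk}(\cdot|h))$, so part (iii) gives $V((h,a^*)) = V(h)$, while the inductive hypothesis gives $u(g|(h,a^*)) = V((h,a^*))$, whence $u(g|h) = V(h)$. Finally, since the horizon is finite it suffices to rule out profitable one-stage deviations: at any $h$ with mover $k$, deviating to $a$ and then following $g$ yields $u_k(g|(h,a)) = V_k((h,a)) \le \max_a V_k((h,a)) = V_k(h) = u_k(g|h)$, where the middle equality is part (ii). Thus no one-stage deviation helps, $g$ is an SPE, and $f$ has the no-mixing property.

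The step I expect to be the main obstacle is part (iii) of the structural lemma, together with maintaining the inductive invariant (i) that every node's value is a genuine terminal payoff vector. This is exactly where the absence of Nature and the no indifference condition are both indispensable: if Nature could move, a node's value would become a strict convex combination of distinct terminal payoff vectors, to which no indifference---stated only for terminal histories---could not be applied, so the propagation from ``the mover is indifferent'' to ``all players are indifferent'' would fail; and without no indifference the propagation fails even without Nature, as the multi-player fixed-sum counterexample given just before this theorem shows (compare also the Nature example in Section~\ref{existence}). The one remaining technical point is the continuity of $a \mapsto V_k((h,a))$ used for the ``support attains the maximum'' claim, which I would handle exactly as in the proof of Theorem~\ref{thm-main result}.
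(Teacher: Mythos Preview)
Your proposal is correct and follows essentially the same approach as the paper: backward induction, the continuity/open-neighborhood argument to show every support action is optimal for the mover, the no indifference condition to propagate indifference from the mover to all players, and then the one-stage deviation principle. The only difference is organizational: the paper iteratively constructs reduced games $G', G'', \ldots$ whose terminal payoffs are payoffs of $G$ (hence inherit no indifference), whereas you package the same content as a structural lemma with invariants (i)--(iii); your invariant (i)---that $V(h)$ is always an actual terminal payoff vector rather than a nontrivial average---makes explicit exactly why the paper's reduced games inherit the no indifference condition and why the argument would break with Nature present.
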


\begin{proof}[Proof of Theorem~\ref{thm-second result}]
This proof is different from the proof of Theorem~\ref{thm-main result}, now we need to use the backward induction to prove this result. Suppose $g = \{g_1,..., g_n\}$ is a pure-strategy profile that satisfies:
$$g_{ti}(h_{t-1}) \in \text{support }(f_{ti}(\cdot|h_{t-1}))$$
for all $t \ge 1$, $i \in I$, and all $h_{t-1} \in H_{t-1}$,
then we need to show that $g$ is a pure-strategy subgame-perfect equilibrium. Suppose the game $G$ has T stages, and we begin with the last stage. Consider any subgame in the last stage beginning with some history $h_{T-1}$ and assume player $i$ is the only mover in that stage, first we show that $g_{Ti}(h_{T-1})$ is an optimal choice for player $i$ in this subgame. Otherwise, there must exist an action $a \in A_{Ti}(h_{T-1})$, such that
$$u_i(h_{T-1}, a) > u_i(h_{T-1}, g_{Ti}(h_{T-1})),$$
since $u_i$ is a continuous function, there exists an open neighborhood $O$ of $g_{Ti}(h_{T-1})$ such that
$$u_i(h_{T-1}, a) > u_i(h_{T-1}, b),$$
for any $b \in O$. Since $g_{Ti}(h_{T-1}) \in \text{support }(f_{Ti}(\cdot|h_{T-1}))$, hence $f_{Ti}(O|h_{T-1}) > 0$ and player $i$ can improve his payoff by transferring this positive probability to the set of optimal choices, contradicting to that $f$ is a subgame-perfect equilibrium. Thus,  $g_{Ti}(h_{T-1})$ is an optimal choice for player $i$ in the subgame follows $h_{T-1}$ and we have
$$u_i(g|h_{T-1}) = u_i(h_{T-1}, g_{Ti}(h_{T-1})) = u_i(f|h_{T-1}).$$
Let $B$ denote the set support$ (f_{Ti}) $, from the above argument we can see that for each $b \in B$, we have $u_i(h_{T-1}, b) = u_i(f|h_{T-1}) = u_i(h_{T-1}, g_{Ti}(h_{T-1}))$. Since the game $G$ satisfies the no indifference condition, we conclude that
$$u_j(h_{T-1}, b) = u_j(h_{T-1}, g_{Ti}(h_{T-1}))$$
for every $j \in I$. Therefore, we have that:
$$u_j(h_{T-1}, g_{Ti}(h_{T-1})) = \int_B u_j(h_{T-1}, b) f_{Ti}(\rmd b|h_{T-1}) =u_i(f|h_{T-1}),$$
and hence $u_j(g|h_{T-1}) = u_j(f|h_{T-1})$ for every $j \in I$.

Now we construct a game $G'$ with $T-1$ stages: $G'$ is the same as $G$ for the first $T-1$ stages, and for each terminal history $h_{T-1}$ in game $G'$, the payoff $u'_i(h_{T-1}) = u_i(h_{T-1}, g_{Ti}(h_{T-1}))$, hence the game $G'$ also satisfies the no indifference condition, and $f$ is still a subgame-perfect equilibrium in $G'$ because $u_j(g|h_{T-1}) = u_j(f|h_{T-1})$ for every $j \in I$. By using the same argument as above we conclude that:
$$u'_j(g|h_{T-2}) = u'_j(f|h_{T-2}),$$
for each history $h_{T-2}$ and every player $j \in I$. According to the definition of $u'_j$ we have that:
$$u_j(g|h_{T-2}) = u'_j(f|h_{T-2}) = u_i(f|h_{T-2}).$$

Keeping using this backward induction argument, we conclude that:
$$u_j(g|h_{t-1}) = u_j(f|h_{t-1})$$
for every $1 \le t \le T$ and every player $j \in I$. Now we are ready to prove that $g$ is a pure-strategy subgame-perfect equilibrium. Fix any history $h_{t-1} \in H_{t-1}$ and assume player $i$ is the mover at $h_{t-1}$. Since the game has finite stages, hence we only need to show that player $i$ cannot improve his payoff in the subgame follows $h_{t-1}$ by a one-stage deviation at $h_{t-1}$: for any action $a \in A_{ti}(h_{t-1})$, combined with the above result and we have that:
\begin{displaymath}
\begin{split}
u_i(g|h_{t-1}) &= u_i(f|h_{t-1})\\&\ge u_i(f|h_{t-1}, a)\\&= u_i(g|h_{t-1}, a)
\end{split}
\end{displaymath}
The first and the second equality is from the above result, and the inequality is due to the fact that $f$ is a subgame-perfect equilibrium. This implies that $g = \{g_1,..., g_n\}$ is not improvable by any one-stage deviation and hence is a pure-strategy subgame-perfect equilibrium.
\end{proof}

This proof is easier than the proof of Theorem~\ref{thm-main result} because we only consider finite horizon games without Nature, but these two restrictions cannot be removed. If Nature is also a (passive) player in the game, we can revise the game $G_1$ in Section~\ref{sec-example} to obtain a game $G_1'$: in stage 1, we change the payoffs for $L_1$ to $(2, 3)$; at the last stage, let Nature be a player after $L_4$ with two actions $L_5, R_5$, and the payoffs are $(8, 1)$ for $L_5$, $(4, 7)$ for $R_5$. Nature's strategy is $0.5L_5 + 0.5R_5$. Obviously this game satisfies the no indifference condition. Then similar to the analysis in Section~\ref{sec-example}, we can show that the mixed strategy profile $f$ constructed in Section~\ref{sec-example} is also a subgame-perfect equilibrium but it does not have the no-mixing property. Theorem~\ref{thm-second result} cannot be generalized to games with infinite horizons, we present a counter example in the Appendix.

\section{Discussion}\label{sec-discussion}
Theorem~\ref{thm-main result} shows that for any two-player game with perfect information, the zero-sum condition guarantees that each subgame-perfect equilibrium has the no-mixing property. The example in Section~\ref{sec-example} indicates that this does not hold for general non zero-sum games. Theorem~\ref{thm-second result} extends the main result to multi-player games under the condition of no indifference. It can be shown that the game has a unique pure-strategy SPE payoff for each player if the game satisfies zero-sum or no indifference condition. Therefore, it is natural to consider whether we can generalize the main result to a general game with a unique pure-strategy SPE payoff. However, this generalization is incorrect: see the game $G_6$ below.

\begin{figure}[htb]
\centering
\includegraphics[width=0.4\textwidth]{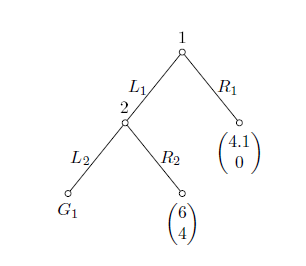}
\caption{The game $G_6$.}
\end{figure}

In this game, if player 2 chooses $L_2$ then the game goes to $G_1$, which is the game in Section~\ref{sec-example}. Based on the analysis in Section~\ref{sec-example}, we can see that the game $G_5$ has only one pure-strategy SPE payoff: $(6, 4)$. Then we consider a mixed strategy profile $g = (R_1; 0.5L_2+0.5R_2; f)$, where $f$ is the mixed SPE for game $G_1$ constructed in Section~\ref{sec-example}. It is easy to verify that $g$ is a subgame-perfect equilibrium, however, there is no pure-strategy subgame-perfect equilibrium in the support$(g)$, which means $g$ does not have the no-mixing property.

\section{Appendix}\label{sec-appendix}
In the proof of Theorem~\ref{thm-second result}, we mentioned that the result cannot be generalized to infinite-horizon games. Here we present an infinite-horizon game that satisfies the no indifference condition, but has a subgame-perfect equilibrium failing to have the no-mixing property. See the game $G_7$ in Figure~7 below:

\begin{figure}[htb]
\centering
\includegraphics[width=0.85\textwidth]{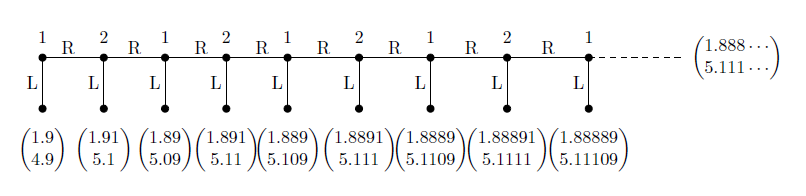}
\caption{The game $G_7$.}
\end{figure}

Obviously, this game satisfies the no indifference condition. It is easy to verify that the following strategy profile is a SPE: both players keeping using $0.5L+0.5R$ at every node. However, this SPE does not have no-mixing property: if player 1 chooses $L$ at stage 1, then player 2 cannot choose $L$ at the second stage..

{\small
\singlespacing

\end{document}